\newtheorem{theorem}{Theorem}[section]
\newtheorem{proposition}{Proposition}[section]
\newtheorem{remark}{Remark}[section]
\newcommand{\Om} {\Omega}
\newcommand{\pa} {\partial}
\newcommand{\be} {\begin{equation}}
	\newcommand{\ee} {\end{equation}}
\newcommand{\bea} {\begin{eqnarray}}
	\newcommand{\eea} {\end{eqnarray}}
\newcommand{\Bea} {\begin{eqnarray*}}
	\newcommand{\Eea} {\end{eqnarray*}}
\newcommand{\de} {\delta}
\newcommand{\De} {\Delta}
\newcommand{\la} {\lambda}
\newcommand{\noi} {\noindent}
\journal{XXXX}
\begin{document}
	
	\begin{frontmatter}
		
		
		
		\title{Parameter estimates and a uniqueness result for double phase problem with a singular nonlinearity}
		\author[add1]{R.Dhanya }
		\ead{dhanya.tr@iisertvm.ac.in}
		\author[add1]{M.S. Indulekha}
		\ead{indulekhams17@iisertvm.ac.in}
		\address[add1]{School of Mathematics, IISER Thiruvananthapuram, Thiruvananthapuram, Kerala, 695551, India  }

		
		%
%

\begin{abstract}
	We consider the boundary value problem $-\De_p u_\la -\De_q u_\la =\lambda g(x) u_\la
	^{-\beta}$ in $\Om$  , $u_\la=0$ on $\pa \Om$ with $u_\la>0$ in $\Omega.$ We assume $\Om$ is a bounded open set in $\mathbb{R}^N$ with smooth boundary,  $1<p<q<\infty$, $\beta\in [0,1),$ $g$ is a positive weight function and $\lambda$ is a positive parameter. We derive an estimate for $u_\lambda$ which describes its exact behavior when the parameter $\lambda$ is large. In general,  by invoking appropriate comparison principles, this estimate can be used as a powerful tool in deducing the existence, non-existence and multiplicity of  positive solutions of nonlinear elliptic boundary value problems. Here, as an application of this estimate,  we obtain a uniqueness result for a nonlinear elliptic boundary value problem with a singular nonlinearity. 
	
\end{abstract}	
		
		\begin{keyword}
		
		p-q Laplacian \sep  $L^\infty$ estimates \sep uniqueness	
			
			
			\MSC[2020] 35A15 \sep 35B33 \sep 35R11 \sep 35J20
			
		\end{keyword}
		
	\end{frontmatter}
	
\section{Introduction}
 We are interested in the positive solution of the non-homogeneous quasi-linear boundary value problem 
$$
(P_\lambda)\left\{ \begin{array}{rcl}
	-\Delta_p u-\Delta_q u&=& \la h(x,u) \mbox{ in } \Omega \\
	u&= &0 \mbox{ on }{\partial \Omega}
\end{array}\right.
$$
where $\Om$ is a bounded smooth domain in $\mathbb{R}^N$, $1<p<q<\infty$  and $\la>0.$ We say that $u$ is a solution of $(P_\lambda)$ if $u\in W^{1,q}_0(\Omega)$ and satisfies the PDE in the weak sense:
$$\int_{\Om} |\nabla u|^{p-2}\nabla u \nabla \varphi + \int_{\Om} |\nabla u|^{q-2}\nabla u \nabla \varphi =\lambda \int_{\Omega} h(x,u) \varphi \mbox{ for all }\varphi \in C^\infty_c(\Omega)$$
The main focus of this article is to obtain the  asymptotic estimate for the positive solution $u_\lambda$ of $(P_\lambda)$ when the function $h(x,u)$ takes the form $g(x) u^{-\beta}$ for $\beta\in [0,1)$ along with certain conditions on $g(x).$ Moreover, we consider the uniqueness of positive solution of  $(P_\la)$ for large parameter when $h(x,u)=f(u)u^{-\beta}$  and $1< p<q \leq 2.$\\  
Problems of the type $(P_\la)$ are closely associated to the minimization of certain energy functionals of the type 
\begin{equation}
\label{eqn:do-ph-functional}
    u \mapsto \int_\Om |\nabla u|^{p}+a(x)|\nabla u|^{q} dx 
\end{equation}
where $1<p<q<\infty$, $\Om \subset \mathbb{R}^{N}$ is a bounded domain and $a: \mathbb{R} \rightarrow [0, \infty)$ is a measurable function. These functionals are called double phase functionals and were introduced by Zhykov(\cite{zhykov1986averaging}, \cite{jikov2012homogenization}) to model strongly anisotropic materials. The differential operator counterpart of (\ref{eqn:do-ph-functional}), $u \mapsto -\De_p u -a(x)\De_q u$, known as the double phase operator is used in modelling a variety of physical problems in plasma physics\cite{wilhelmsson1987explosive}, biophysics\cite{fife2013mathematical}, reaction diffusion systems \cite{cherfils2005stationary}, etc. and are also well analyzed theoretically. 


   The quasilinear elliptic operator $\mathcal{L}_{p,q}u :=\De_p u +\De_q u $ is  called p-q Laplace operator and it reduces to the standard p Laplace operator when $p=q$. It is observed that several existence and multiplicity results for the p Laplacian can be extended to the p-q Laplacian as well since the associated energy functional preserves the required variational structure (see \cite{marano2017some} and the references therein). But the non-homogeneous nature of the operator poses a greater challenge in understanding certain problems like  the qualitative properties of the solutions of nonlinear p-q Laplace equation, eigenvalue problem of p-q Laplacian etc.  In this work, we mainly address one such difficulty associated to nonhomogenity namely, the scaling property of the solutions of p-q Laplace equation. For instance, if $u_\lambda$ is the unique solution of  $	-\De_p u_\la=\la \text{ in } \Om \text{ and } u_\la=0 \text{ on } \pa\Omega,$ then it can easily be verified that $u_\la=\la^{\frac{1}{p-1}}u_1.$ {A result of this type is impossible for p-q Laplace operator due to its non-homogeneous nature. The novelty of this work is in obtaining the exact asymptotic estimate of the solutions of certain nonlinear elliptic problems involving p-q Laplace operator which is given in Theorem \ref{thm:sing_esti}. }

For the rest of the paper we denote $d(x):=d(x, \pa \Om),$ the distance function in $\Om$. 
\begin{theorem}
	\label{thm:sing_esti}
	Let $u_\la$ be the unique positive solution of the boundary value problem
	\begin{equation}
	\begin{aligned}
		\label{eqn:sing_esti}
		-\De_p u-\De_q u&=\la \frac{g(x)}{u^\beta} \ \text{in} \ \Om \\
		u&=0\ \text{on}\ \pa \Om
	\end{aligned}
	\end{equation}
	where $1<p<q<\infty, $ $ 0<\beta <1 , \la>0$ and $g$ satisfies the hypothesis $(G_\beta)$ given below:
	\begin{itemize}
    \item[\textbf{($G_\beta$)}]  For a given $\beta\in [0,1), \, \exists$  $\delta\in (0,1-\beta)$ and $C>0,$  such that $ 0< g(x)\leq C d(x)^{-\delta} \mbox{ in } \Omega$ 
\end{itemize}
Then for a given $\la_0>0,$ there exists constants $c_1, c_2 >0$ such that 
	\begin{equation} 
		c_1\la^{\frac{1}{q-1+\beta}}d(x)\leq u_\la(x)\leq c_2 \la^{\frac{1}{q-1+\beta}}d(x)
	\end{equation}
	for all $\la\geq \la_0$ and for every $x \in \Om$.
\end{theorem}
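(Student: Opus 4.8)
The plan is to reduce the $p$-$q$ problem to a perturbation of the pure $q$-Laplacian singular problem, for which the scaling exponent $\gamma:=\frac{1}{q-1+\beta}$ is \emph{exact}, and then to close the two-sided estimate by a weak comparison argument. First I would introduce the auxiliary function $\phi$, the unique positive weak solution of $-\Delta_q\phi=g(x)\phi^{-\beta}$ in $\Omega$, $\phi=0$ on $\partial\Omega$. Under $(G_\beta)$ the singularity $g\phi^{-\beta}\sim d^{-\delta-\beta}$ is mild (as $\delta+\beta<1$), so $\phi\in C(\overline{\Omega})\cap W^{1,q}_0(\Omega)$ exists, is unique, and satisfies $k_1 d(x)\le\phi(x)\le k_2 d(x)$ for some $k_1,k_2>0$; I would either cite this or prove it via the barriers $c\,d$ and a standard singular comparison. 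The key algebraic observation is the exponent identity $\gamma(q-1)=1-\gamma\beta$, which forces $\lambda^\gamma\phi$ to solve the pure $q$-problem $-\Delta_q v=\lambda g v^{-\beta}$ exactly; the full $p$-$q$ solution $u_\lambda$ (taken here as the given unique positive solution) is then a perturbation of $\lambda^\gamma\phi$.

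The heart of the argument is the residual computation. For a constant $c>0$ set $w=c\lambda^\gamma\phi$ and $t:=\lambda^\gamma$. Using $-\Delta_q\phi=g\phi^{-\beta}$ together with $\gamma(q-1+\beta)=1$ (so that $\lambda t^{-\beta}=t^{q-1}$), one finds
\begin{align*}
-\Delta_p w-\Delta_q w-\lambda g w^{-\beta}
= c^{\,p-1}\lambda^{\gamma(p-1)}(-\Delta_p\phi)+\bigl(c^{\,q-1}-c^{-\beta}\bigr)\lambda^{\gamma(q-1)}\,g\phi^{-\beta}.
\end{align*}
Since $q-1>p-1$ and $t\ge\lambda_0^\gamma=:t_0$, the first term is of lower order in $\lambda$. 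The crucial estimate I must establish is the bound $|\Delta_p\phi|\le C_\ast\,g\phi^{-\beta}$ in $\Omega$: near $\partial\Omega$ both sides behave like $d^{-\delta-\beta}$, because $\phi\sim d$ has gradient bounded and bounded away from zero there, so the $p$- and $q$-Laplacians of $\phi$ are comparable, while in the interior both quantities are bounded with $g\phi^{-\beta}$ bounded below. Granting this, for the supersolution I take $M$ large: as $M^{q-1}$ dominates both $M^{-\beta}$ and $M^{p-1}t_0^{p-q}C_\ast$ (using $q>p$), the nonnegative margin $(M^{q-1}-M^{-\beta})g\phi^{-\beta}$ absorbs the first term, making $\overline{u}:=M\lambda^\gamma\phi$ a supersolution for every $\lambda\ge\lambda_0$. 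For the subsolution I take $m$ small: since $p-1+\beta>0$, the ratio $m^{p-1}t_0^{p-q}C_\ast/m^{-\beta}=m^{p-1+\beta}t_0^{p-q}C_\ast\to0$, so the negative margin makes $\underline{u}:=m\lambda^\gamma\phi$ a subsolution for every $\lambda\ge\lambda_0$. The uniformity in $\lambda$ is exactly the point where $t^{p-q}\le t_0^{p-q}$ is used.

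Finally I would invoke a weak comparison principle for the singular $p$-$q$ problem — available because $-\Delta_p-\Delta_q$ is strictly monotone and $s\mapsto\lambda g(x)s^{-\beta}$ is decreasing — to conclude $\underline{u}\le u_\lambda\le\overline{u}$, that is $m\lambda^\gamma\phi\le u_\lambda\le M\lambda^\gamma\phi$. Combining with $k_1 d\le\phi\le k_2 d$ then yields the claim with $c_1=mk_1$ and $c_2=Mk_2$.

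I expect the main obstacle to be the boundary estimate $|\Delta_p\phi|\le C_\ast\,g\phi^{-\beta}$: because $\phi$ solves only the $q$-equation, its second-order regularity is limited, so this comparison of the two operators must be made rigorous near $\partial\Omega$ through explicit power-of-$d$ barriers rather than a pointwise computation, and one must verify that it persists in the weak formulation. A secondary technical point is stating and applying the comparison principle for the singular term, where admissible test functions must be chosen carefully because $w^{-\beta}$ blows up at the boundary.
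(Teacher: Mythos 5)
Your overall architecture (rescale by $\lambda^{\gamma}$ with $\gamma=\frac{1}{q-1+\beta}$, treat the $p$-term as a perturbation that is small for $\lambda\ge\lambda_0$ because $t^{p-q}\le t_0^{p-q}$, close with weak comparison) is sound, and your residual computation is correct. However, the step you yourself flag as the ``main obstacle'' is not a technical point to be patched: the estimate $|\Delta_p\phi|\le C_\ast\,g\,\phi^{-\beta}$ is false in general, and it fails in the \emph{interior}, precisely where you assert that ``both quantities are bounded.'' Since $\phi>0$ in $\Omega$ and $\phi=0$ on $\partial\Omega$, the function $\phi$ has interior critical points, and there $|\nabla\phi|^{p-2}$ and $|\nabla\phi|^{q-2}$ degenerate at different rates. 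The radial model makes this explicit: if $-\Delta_q\phi=g\phi^{-\beta}$ on a ball with $g\phi^{-\beta}$ bounded near the center, then $r^{N-1}|\phi'|^{q-2}\phi'\sim -cr^{N}$, so $|\phi'|\sim Cr^{1/(q-1)}$ and
\begin{equation*}
-\Delta_p\phi \;=\; -r^{1-N}\bigl(r^{N-1}|\phi'|^{p-2}\phi'\bigr)' \;\sim\; C'\,r^{\frac{p-1}{q-1}-1}\;\longrightarrow\;\infty \quad (r\to0),
\end{equation*}
since $p<q$. Thus $-\Delta_p\phi$ is unbounded (though integrable) where $g\phi^{-\beta}$ is bounded, and no constant $C_\ast$ works. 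Concretely, your subsolution inequality $m^{p-1}t^{p-1}(-\Delta_p\phi)+(m^{q-1}-m^{-\beta})t^{q-1}g\phi^{-\beta}\le0$ cannot hold a.e.\ near such a point for any choice of $m$: the negative margin is bounded in $x$ while the first term is not. Even near $\partial\Omega$, where $|\nabla\phi|$ is bounded away from $0$, the pointwise comparability of $\Delta_p\phi$ and $\Delta_q\phi$ is not available, because both are divergences involving $D^2\phi$ contracted with different weights and $\phi$ is only $C^{1,\alpha}$; knowing the sign and size of one divergence gives no pointwise control of the other.

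This is why the paper takes a route that never requires second-order information on any fixed auxiliary profile. It sets $\tilde u_\lambda=\lambda^{-\gamma}u_\lambda$, which solves $-\mu\Delta_p\tilde u_\lambda-\Delta_q\tilde u_\lambda=g\tilde u_\lambda^{-\beta}$ with $\mu=\lambda^{(p-q)\gamma}\to0$, proves a uniform $L^\infty$ bound by Moser iteration, then a uniform $C^{1,\alpha}(\overline\Omega)$ bound (Theorem 2.1), and extracts a $C^1$-limit $v_0$ solving the pure $q$-Laplace problem; the lower bound $\ge c\,d(x)$ comes from Hopf's lemma for $v_0$ plus uniform convergence of the normal derivatives, and the upper bound from the $C^{1}$ bound itself. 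The comparison functions used there are solutions of auxiliary \emph{nonsingular} problems with right-hand sides $g/M_0^{\beta}$ and $g\,d^{-\beta}$, to which that compactness argument applies directly. If you want to keep a sub/supersolution scheme, you would have to replace $c\lambda^{\gamma}\phi$ by comparison with solutions of auxiliary problems (as the paper does) rather than by an explicit multiple of a fixed profile whose $p$-Laplacian you cannot control.
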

\vspace{0.2 cm}
The existence, uniqueness and H\"{o}lder regularity of (\ref{eqn:sing_esti}) is proved in \cite{giacomoni2021sobolev}. In this work, we are interested in the asymptotic behavior of solution $u_\lambda$ when $\lambda$ tends to infinity. This result is proved in Theorem \ref{thm:sing_esti}, where the main idea of the proof is  inspired by the discussion on scaling property of the solutions of p-q Laplace operator given in \cite{marano2017some}. The proof inevitably depends on the $C^{1,\alpha}$ regularity result proved by Giacomoni et. al in \cite{giacomoni2021sobolev}. As mentioned in the abstract, the estimate of this type is known to be of great importance in establishing several qualitative properties of the solutions of $(P_\la).$ In addition, the ideas presented here can also be extended to certain class of sign changing weight functions.  In an ongoing work, we derive similar estimates to prove the existence of a solution to a nonhomogeneous boundary value problem with an indefinite singular reaction term.




In this article, next we focus our attention to the quasilinear nonlinear elliptic problems of the type 
\begin{equation}
\begin{aligned}
    \label{eqn:uniqueness_pq}
		-\De_p u-\De_q u=\la \frac{f(u)}{u^\beta} \ \text{in} \ \Om \\
		u>0 \, \text{ in } \Omega\\
		u=0\ \text{on}\ \pa \Om
\end{aligned}
\end{equation}
 Recently, existence and multiplicity of solutions for p-q Laplace equation with singular non-linearity has gained considerable attention. Arora(\cite{arora2021multiplicity}) and Acharya et. al.  (\cite{acharya2021existence}) proved the existence of two solutions of \eqref{eqn:uniqueness_pq} using the technique of sub-super solutions. In a series of papers (\cite{papageorgiou2021existence}, \cite{papageorgiou2021positive}, \cite{papageorgiou2021singular}), Papageorgiou and Winkert have explored bifurcation type results describing the changes in the set of positive solutions of the problem as the parameter $\la$ varies. The reaction term considered in their papers has the combined effects of the singular term as well as a super-diffusive growth term. In \cite{papageorgiou2020nonlinear}, Papageorgiou et.al have also provided a similar bifurcation type result for more general non-homogeneous differential operators. These works deal with existence and multiplicity results for (\ref{eqn:uniqueness_pq}), whereas we are interested in the uniqueness aspect of the solution for large $\lambda.$ In this context, the authors in \cite{chu2020uniqueness} and \cite{cong2022uniqueness} have proved the uniqueness of solutions of \eqref{eqn:uniqueness_pq}  when $p=q$ and with certain additional assumptions on $f. $ In section 3 of this paper, we provide a concrete application of Theorem \ref{thm:sing_esti} in proving a uniqueness result for an elliptic boundary value problem (\ref{eqn:uniqueness_pq}), the result is stated below.

\begin{theorem}
	\label{thm: uniqueness_pq}
Let $1<p<q\leq 2$ and consider the elliptic boundary value problem (\ref{eqn:uniqueness_pq}) where
	f satisfies the conditions
	\begin{itemize}
		\item[(A1)] $f:[0,\infty) \rightarrow (0,\infty)$ is of class $C^{1}$ with $\inf_{[0,\infty)}f>0$, and 
		\item[(A2)] There exists a constant $a>0$ such that $\frac{f(z)}{z^{\beta}}$ is decreasing on $[a,\infty)$.
	\end{itemize}
	Also, we assume either $0<\beta < \frac{(q-1)(1+p-q)}{1+q-p}$ or $\sup_{[0,\infty)}f< \infty$ and $0<\beta<1-q+p$. Then there exists $\la_0>0$ such that (\ref{eqn:uniqueness_pq}) has a unique solution for every $\la>\la_0$.
\end{theorem}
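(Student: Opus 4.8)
The plan is to separate existence, which is soft, from uniqueness, which is the substantive claim and the place where Theorem \ref{thm:sing_esti} is used. Existence of at least one solution of \eqref{eqn:uniqueness_pq} for large $\la$ is available for this class by the sub--super-solution method (see \cite{arora2021multiplicity,acharya2021existence}): by (A1), $m_0:=\inf_{[0,\infty)}f>0$, and the solution $w_\la$ of $-\De_p w-\De_q w=\la m_0 w^{-\beta}$, $w=0$ on $\pa\Om$ --- whose constant weight $g\equiv m_0$ trivially satisfies $(G_\beta)$ --- serves as a subsolution, while a supersolution is built from the bounds on $f(z)z^{-\beta}$ in (A2) (in the bounded-$f$ case one may simply take the solution with $g\equiv\sup f$). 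Thus it remains to prove that the solution is unique once $\la$ is large.

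First I would record an a priori lower bound valid for \emph{every} solution. Since $-\De_p u_\la-\De_q u_\la=\la f(u_\la)u_\la^{-\beta}\ge \la m_0 u_\la^{-\beta}$, the solution $u_\la$ is a supersolution of the pure singular problem with weight $m_0$; because $z\mapsto m_0 z^{-\beta}$ is decreasing the weak comparison principle applies and gives $u_\la\ge w_\la$, whence Theorem \ref{thm:sing_esti} yields
\[
u_\la(x)\ \ge\ c_1\,\la^{1/(q-1+\beta)}\,d(x)\qquad(x\in\Om)
\]
for all $\la\ge\la_0$. The key structural consequence is localization: writing $\theta=1/(q-1+\beta)$, one has $u_\la(x)\ge a$ except on the thin boundary layer $\Om_\la:=\{x:d(x)< a\,c_1^{-1}\la^{-\theta}\}$, whose width tends to $0$ as $\la\to\infty$. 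On the bulk $\Om\setminus\Om_\la$ every solution therefore takes values in $[a,\infty)$, precisely the range on which (A2) makes $z\mapsto f(z)z^{-\beta}$ --- and hence also $z\mapsto f(z)z^{-(q-1+\beta)}$ --- decreasing.

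The uniqueness itself I would obtain by a comparison of two solutions $u_1,u_2$ tuned to the non-homogeneous operator. Testing the difference of the two weak formulations with the D\'iaz--Sa\'a--type functions $(u_1^{q}-u_2^{q})/u_i^{q-1}$ (legitimate here because $q\le 2$, which is exactly the regime in which the associated hidden-convexity/Picone inequality for $-\De_p-\De_q$ holds and the principal part stays nonnegative) produces an identity of the schematic form
\[
0\ \le\ \mathcal{Q}(u_1,u_2)\ =\ \la\int_{\Om}\Big(\tfrac{f(u_1)}{u_1^{\,q-1+\beta}}-\tfrac{f(u_2)}{u_2^{\,q-1+\beta}}\Big)\,(u_1^{q}-u_2^{q}),
\]
with $\mathcal{Q}\ge 0$ the operator contribution. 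On the bulk the integrand is $\le 0$ by the monotonicity just noted, so the right-hand side is controlled by its contribution over $\Om_\la$, where $f(z)z^{-\beta}$ may fail to be monotone. I would bound that contribution using a Lipschitz estimate for $f(z)z^{-\beta}$ on the compact middle range $[z_0,a]$, together with the two-sided control of $u_i$ and of $|\na u_i|\sim\la^{\theta}$ supplied by the $C^{1,\al}$ theory of \cite{giacomoni2021sobolev}, and a Poincar\'e inequality on the layer of width $\sim\la^{-\theta}$; the goal is to show the layer term is of strictly lower order in $\la$ than $\mathcal{Q}$, forcing $u_1\equiv u_2$ for $\la$ large.

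The main obstacle is exactly the clash of homogeneities: because $p-1\neq q-1$, no single substitution convexifies both the $p$- and the $q$-part, so the comparison inevitably leaves a defect term which, together with the boundary-layer error, must be dominated by the gain coming from the decreasing reaction. Tracking the powers of $\la$ (through $\theta$, the layer width, and the gradient size $\la^\theta$) is what produces the explicit thresholds, and it also explains the dichotomy in the hypotheses: when $f$ is merely bounded below the defect is larger and one needs the smaller bound $\beta<(q-1)(1+p-q)/(1+q-p)$, whereas if $\sup_{[0,\infty)}f<\infty$ the reaction is globally comparable to $z^{-\beta}$, the upper estimate and the layer term become cleaner, and the weaker restriction $\beta<1+p-q$ suffices.
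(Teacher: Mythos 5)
Your existence remark and your a priori lower bound $u_\la\ge c_1\la^{1/(q-1+\beta)}d(x)$ (hence the localization of the non-monotone range of $f(z)z^{-\beta}$ to a boundary layer of width $\sim\la^{-1/(q-1+\beta)}$) coincide with Step~1 of the paper's proof. But the uniqueness mechanism you propose has a genuine gap at its central point: the asserted inequality $\mathcal{Q}(u_1,u_2)\ge 0$ for the operator contribution under the D\'iaz--Sa\'a test functions $(u_1^q-u_2^q)/u_i^{q-1}$ is not justified for $-\De_p-\De_q$ with $p<q$. Hidden convexity of $w\mapsto\int|\nabla w|^{p}$ along the curve $t\mapsto((1-t)u_1^{q}+tu_2^{q})^{1/q}$ requires $p\ge q$, which is exactly the wrong inequality here; so the $p$-part of $\mathcal{Q}$ can have the wrong sign. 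You acknowledge a ``defect term,'' but you offer no quantity that could absorb it: on the bulk the decreasing reaction only gives \emph{nonpositivity} of the right-hand side, not a strictly negative term of a definite size, so a positive defect in $\mathcal{Q}$ cannot be dominated. Your parenthetical claim that $q\le 2$ is ``exactly the regime in which the hidden-convexity/Picone inequality for $-\De_p-\De_q$ holds'' is incorrect and misidentifies where the hypothesis $q\le 2$ is actually used.

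The paper avoids this entirely: it tests the difference of the two equations with $u-v$ itself and invokes the elementary strong-monotonicity inequality $(|a|^{r-2}a-|b|^{r-2}b)\cdot(a-b)\ge(r-1)|a-b|^2(|a|+|b|)^{r-2}$, valid precisely for $1<r\le 2$ (this is the only place $q\le 2$ enters), with $r=p$ and $r=q$. The denominators $(|\nabla u|+|\nabla v|)^{2-r}$ are then controlled by the $C^1$ bounds of its Step~2, namely $\|u\|_{C^1}\le k_2\la^{1/(q-1)}$ in general and $\le k_2\la^{1/(q-1+\beta)}$ only when $\sup f<\infty$; the right-hand side $\la\int h'(\xi)(u-v)^2$ is handled by (A2) on the bulk and by the layer estimate on $\{d(x)\lesssim\la^{-1/(q-1+\beta)}\}$, yielding $(p+q-2)\int|\nabla(u-v)|^2\le m\,\la^{\sigma}\int|\nabla(u-v)|^2$ with $\sigma<0$ under the stated restrictions on $\beta$. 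Note in particular that your claimed gradient size $|\nabla u_i|\sim\la^{1/(q-1+\beta)}$ is only available in the bounded-$f$ case; in the general case the exponent is the larger $1/(q-1)$, and it is precisely this discrepancy that produces the more restrictive threshold $\beta<(q-1)(1+p-q)/(1+q-p)$. Since you never carry out the power counting, neither threshold is actually derived in your argument.
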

\noi The above theorem extends the result proved for p Laplace equation in \cite{cong2022uniqueness}. Our paper is organized as follows. The proof of main results, Theorems 1.1 and 1.2 are detailed in sections 2 and 3 respectively. In the appendix, we prove the existence of a solution for the problem \eqref{eqn:uniqueness_pq} satisfying the conditions $(A1)$ and $(A2)$. We also prove an $L^\infty$ regularity result in the appendix.
\section{Estimates}
 Before proving main results, we state a version of the regularity theorem (Theorem 1.7, \cite{giacomoni2021sobolev}) which is used several times in our paper to obtain uniform $C^{1,\alpha}$ bounds for weak solutions. We say $g$ satisfies the condition $(G_0)$ if $0<g(x)\leq C d(x)^{-\delta}$ for some $C>0$ and $\delta\in (0,1).$
\begin{theorem}
	\label{thm:giacomoni_pq_regularity}
	Let $u \in W_{0}^{1,q}(\Om)$ be the weak solution of the BVP
	\begin{equation}
	\begin{aligned}
		-\mu \De_p u-\De_q u =g(x)  \ \text{in} \ \Om \\
		u=0\ \text{on}\ \pa \Om
	\end{aligned}
	\end{equation}
	where $1<p<q<\infty $, $0\leq\mu\leq \mu_0$ for some $\mu_0>0$ and $g$ satisfies the condition $(G_0).$  Suppose $0\leq u\leq M$ and $0\leq u \leq Kd(x)$ in $\Om$ for some constants $M$ and $K$. Then there exists a constant $\alpha \in (0,1)$ depending only on $N,q,\de$ such that $u \in C^{1,\alpha}(\Bar{\Om})$ and
	\begin{equation}
		\|u\|_{C^{1,\alpha}(\Bar{\Om})}\leq C(N,q,\de,M,\Om).
	\end{equation}
\end{theorem}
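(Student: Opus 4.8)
Since the statement is quoted (as \emph{a version}) from Theorem~1.7 of \cite{giacomoni2021sobolev}, the most economical route is to verify that the present hypotheses place us within the scope of that result and then to invoke it. Concretely, I would check that the operator $-\mu\De_p-\De_q$ with $0\le\mu\le\mu_0$ belongs to the class of quasilinear operators of $(p,q)$-growth treated there: the $p$-Laplacian term $\mu\De_p$, of lower growth order than $\De_q$, only enhances ellipticity and, being bounded by $\mu_0$, contributes constants that remain uniform as $\mu\downarrow0$. I would then observe that $g$ of class $(G_0)$ furnishes a right-hand side dominated by $d(x)^{-\de}$ with $\de\in(0,1)$, and that the two prescribed bounds $0\le u\le M$ and $0\le u\le Kd(x)$ are exactly the qualitative inputs the theorem requires; its conclusion, including the structural dependence $C=C(N,q,\de,M,\Om)$, then transfers verbatim.

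To reconstruct the underlying argument rather than merely cite it, I would separate an interior estimate from a boundary estimate. On any subdomain $\{x:d(x)\ge\rho_0\}$ the weight satisfies $g\le C\rho_0^{-\de}$, so $u$ solves a $(p,q)$-Laplace equation with bounded right-hand side and bounded solution; the classical interior $C^{1,\al}$ theory for degenerate and singular quasilinear operators (in the spirit of DiBenedetto, Tolksdorf and Lieberman) then yields uniform interior gradient Hölder bounds, uniformly in $\mu\in[0,\mu_0]$. The genuine difficulty lies near $\pa\Om$, where $g$ is unbounded.

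For the boundary estimate I would run a scaling argument that plays the vanishing of the solution against the blow-up of the datum. Fixing $x_0$ with $d(x_0)=2\rho$ small and setting $v(y):=\rho^{-1}u(x_0+\rho y)$ on the unit ball, the choice of scaling makes gradients invariant, so $v$ solves $-\mu\De_p v-\De_q v=\rho\,g(x_0+\rho y)$; the hypotheses give $v\le 3K$ and, since $d\ge\rho$ on $B_\rho(x_0)$, the bound $\rho\,g(x_0+\rho y)\le C\rho^{\,1-\de}$. Because $\de<1$ this rescaled datum is bounded (indeed small) independently of $x_0$ and $\rho$, so the unit-scale interior and---after flattening $\pa\Om$---boundary $C^{1,\al}$ estimates apply with constants independent of the base point. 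Undoing the scaling converts these into a uniform bound for $|\nabla u|$ and for the Hölder seminorm of $\nabla u$ up to $\pa\Om$; patching interior and boundary estimates delivers $u\in C^{1,\al}(\overline\Om)$ with the asserted norm bound.

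The main obstacle is exactly this boundary analysis: one must extract $C^{1,\al}$ regularity up to $\pa\Om$ while the right-hand side blows up like $d(x)^{-\de}$, the sole leverage being the a priori linear vanishing $u\le Kd(x)$. The condition $\de<1$ is the precise threshold making the scaled datum $\rho^{1-\de}g$ bounded; for $\de\ge1$ the trade-off fails and one could not expect linear boundary behavior, hence not $C^{1,\al}$ up to $\pa\Om$ with $\mu$-uniform constants. A secondary point to track throughout is uniformity as $\mu\downarrow0$, so that no constant degenerates in the limit where the $p$-Laplacian term disappears.
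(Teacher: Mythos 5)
Your proposal matches the paper's treatment: the paper offers no proof of this statement, presenting it only as a restatement of Theorem~1.7 of the cited work of Giacomoni et al., which is precisely the route you take first. Your additional sketch of the underlying interior/boundary scaling argument (with the key observation that $u\le Kd(x)$ and $\de<1$ make the rescaled datum $\rho^{1-\de}$-bounded near $\pa\Om$, uniformly in $\mu\in[0,\mu_0]$) is a reasonable reconstruction but goes beyond what the paper itself supplies.
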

Now, we state and prove a proposition which provides the case $\beta=0$ of Theorem \ref{thm:sing_esti}.
\begin{proposition}
	\label{thm: constant_esti}
	Let $v_\la$ be the unique solution of the following quasilinear BVP:
	\begin{equation}
	\begin{aligned}
	\label{vlambda}
		-\De_p v-\De_q v=\la g(x) \ \text{in} \ \Om \\
		v=0\ \text{on}\ \pa \Om
	\end{aligned}
	\end{equation}
	where $1<p<q<\infty$, $\la>0, $  and $g$ satisfies the hypothesis $(G_0).$  Then for a given $\la_0>0,$ there exists positive constants $ c_1,c_2$ such that
	\begin{equation*}
		c_1\la^{\frac{1}{q-1}}d(x)\leq v_\la(x)\leq c_2 \la^{\frac{1}{q-1}}d(x)
	\end{equation*}
	for all $\la\geq \la_0$ and for every $x \in \Om$.
\end{proposition}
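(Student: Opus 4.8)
The plan is to eliminate the parameter $\la$ by exploiting the $(q-1)$-homogeneity of the leading $q$-Laplacian term, thereby converting the family $(v_\la)_{\la\ge\la_0}$ into a family of solutions of problems of exactly the type handled by Theorem~\ref{thm:giacomoni_pq_regularity}. Concretely, I would set $w_\la:=\la^{-\frac{1}{q-1}}v_\la$. Using $\De_r(c\,u)=c^{r-1}\De_r u$ and dividing \eqref{vlambda} by $\la$, one checks that $w_\la$ is the weak solution of
\begin{equation*}
-\mu\,\De_p w_\la-\De_q w_\la=g(x)\ \text{in}\ \Om,\qquad w_\la=0\ \text{on}\ \pa\Om,\qquad \mu:=\la^{\frac{p-q}{q-1}}.
\end{equation*}
Since $p<q$, the exponent $\frac{p-q}{q-1}$ is negative, so $0<\mu\le\mu_0:=\la_0^{\frac{p-q}{q-1}}$ for every $\la\ge\la_0$ (with $\mu\to0$ as $\la\to\infty$). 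Hence the proposition is equivalent to the $\la$-free assertion $c_1 d(x)\le w_\la(x)\le c_2 d(x)$ with $c_1,c_2$ independent of $\mu\in(0,\mu_0]$.

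To obtain the two bounds I would compare $w_\la$ with explicit barriers of the form $\Psi(d(x))$, writing $\mathcal{A}w:=-\mu\De_p w-\De_q w$ and using that $\mathcal{A}$ is a monotone operator, so it obeys the weak comparison principle. With $|\nabla d|=1$ and $\De d$ bounded in a tubular neighbourhood $\{d<d_0\}$ of $\pa\Om$, a direct computation gives $-\De_r\Psi(d)=|\Psi'|^{r-2}\big[(r-1)(-\Psi'')-\Psi'\De d\big]$. For the upper bound I take $\Psi$ concave with $\Psi(t)\sim At-C_1 t^{2-\delta}$ near $t=0$, so that the term $(q-1)|\Psi'|^{q-2}|\Psi''|$ produces exactly the singular factor $\sim t^{-\delta}$; choosing $C_1$ large yields $-\De_q\Psi(d)\ge C d^{-\delta}\ge g$, while $-\mu\De_p\Psi(d)\ge0$ only helps, so $\Psi(d)$ is a supersolution on $\{d<d_0\}$. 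After scaling the barrier so that it dominates $w_\la$ on $\{d=d_0\}$ (which uses the uniform $L^\infty$ bound for $w_\la$ proved in the appendix), comparison gives $w_\la\le Kd$ near $\pa\Om$, and $w_\la\le M$ forces $w_\la\le (M/d_0)d$ in the interior; together these give $w_\la\le c_2 d$. This is also precisely where Theorem~\ref{thm:giacomoni_pq_regularity} enters: the bounds $0\le w_\la\le M$ and $w_\la\le Kd$ are its hypotheses, and it upgrades the whole family to a uniform bound in $C^{1,\alpha}(\overline{\Om})$.

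For the lower bound I would use a convex increasing profile $\Psi$ with $\Psi(0)=0$ and $\Psi'(0)>0$ (e.g. $\Psi(t)=c_1(e^{\kappa t}-1)/\kappa$ with $\kappa$ large), for which the bracket $(r-1)(-\Psi'')-\Psi'\De d$ is negative for both $r=p$ and $r=q$; then $\mathcal{A}\Psi(d)<0\le g$ on $\{d<d_0\}$. Replacing $\Psi(d)$ by $\min\{\Psi(d),c^*\}$ — a minimum of subsolutions, the constant $c^*$ serving as a subsolution in the interior because $\mathcal{A}(\mathrm{const})=0\le g$ — produces a \emph{global} subsolution vanishing on $\pa\Om$ and bounded below by $c_1 d$. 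Comparison then yields $w_\la\ge c_1 d$, uniformly in $\mu$. Undoing the scaling $v_\la=\la^{\frac{1}{q-1}}w_\la$ gives the stated two-sided estimate.

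The step I expect to be the main obstacle is making the barrier argument genuinely uniform as $\mu\to0$: one must verify that the $p$-Laplacian contribution never reverses the sign dictated by the $q$-term in either barrier, and that the hypothesis $(G_0)$ with $\delta<1$ is exactly what allows a profile $\Psi(d)\sim d$ to absorb the singular right-hand side $d^{-\delta}$ (a larger $\delta$ would force a genuinely super-linear boundary profile and destroy the estimate). Verifying the weak comparison principle with the singular though integrable datum $g$, and the gluing of the near-boundary barriers to the interior, are the remaining technical points.
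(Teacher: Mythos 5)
Your rescaling $w_\la=\la^{-1/(q-1)}v_\la$, leading to $-\mu\De_p w_\la-\De_q w_\la=g$ with $\mu=\la^{(p-q)/(q-1)}\in(0,\mu_0]$, is exactly the paper's starting point, and your upper bound is a workable (if different) route: the paper simply invokes the uniform $L^\infty$ bound together with Theorem \ref{thm:giacomoni_pq_regularity} (and Proposition 2.7 of Giacomoni et al.) to get $w_\la\le Kd$, whereas your concave barrier $\Psi(t)=At-C_1t^{2-\de}$ proves that bound by hand; both hinge on the same ingredient, namely $\sup_\la\|w_\la\|_\infty<\infty$, used to match the barrier on $\{d=d_0\}$.

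The lower bound is where your argument has a genuine gap. First, $\min\{\Psi(d),c^*\}$ is \emph{not} a subsolution: for monotone operators of this type it is the \emph{maximum} of subsolutions that remains a subsolution (and the minimum of \emph{super}solutions that remains a supersolution); at the interface $\{\Psi(d)=c^*\}$ the downward kink contributes a positive singular measure to $-\mu\De_p-\De_q$, violating the inequality. Second, even repairing that, the constant $c^*>0$ exceeds $w_\la=0$ on $\pa\Om$, so no comparison on $\Om$ can yield $w_\la\ge c^*$ anywhere; to activate the near-boundary barrier on the strip $\{d<d_0\}$ you must first know $w_\la\ge m>0$ on $\{d=d_0\}$ with $m$ \emph{independent of} $\mu\in(0,\mu_0]$, and this uniform interior positivity is precisely the nontrivial content of the lower estimate — it does not follow from monotonicity of $v_\la$ (which only gives $w_\la\ge\la^{-1/(q-1)}v_{\la_0}\to0$), nor from an obvious monotonicity in $\mu$, since the sign of $\De_p w_\la$ alone is not controlled. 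The paper closes exactly this gap by compactness: uniform $C^{1,\alpha}$ bounds give $w_\la\to v_0$ in $C^1(\overline\Om)$ where $-\De_q v_0=g$, Vazquez's strong maximum principle gives $\pa v_0/\pa\nu<0$ on $\pa\Om$, and uniform convergence of the normal derivatives transfers the bound $w_\la\ge cd$ to all large $\la$ (the remaining compact range $[\la_0,\la']$ being handled by monotonicity of $\la\mapsto v_\la$). You would need to import this compactness-plus-Hopf step, or some equivalent uniform Harnack-type bound, for your barrier scheme to go through.
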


\noi \textbf{Proof: } The existence and uniqueness of weak solution $v_\la\in W^{1,q}_0(\Omega)$ of (\ref{vlambda}) can be obtained by standard minimization technique. Next, we can show that $\la\rightarrow v_\la$ is monotone, namely: 
	$$  v_{\la_1}(x)\leq v_{\la_2}(x) \; \mbox{ if } \; \la_1\leq \la_2$$
	by taking $(v_{\la_1}-v_{\la_2})^+$ as the test function in the weak formulation. Motivated by the discussions in the work of  Marano and Mosconi \cite{marano2017some},  we define :
	\begin{equation}
	    \tilde{v}_{\la}:=\la^{\frac{-1}{q-1}}v_\la 
	\end{equation} 
	and prove the asymptotic behavior of the solution $\tilde{v}_\lambda$ for large $\lambda.$ Using this substitution, the boundary value problem (\ref{vlambda}) reduces to 
	\begin{equation}
	\begin{aligned}
		\label{eqn:theorem 1}
		-\mu \De_p \tilde{v}_{\la}-\De_q \tilde{v}_{\la}=g(x)\ \text{in}\ \Om \\
		\tilde{v}_{\la}=0\ \text{on}\ \pa \Om
	\end{aligned}
	\end{equation}	
	where $\mu=\la^{\frac{p-q}{q-1}}$. It is important to note that $\mu\rightarrow 0$  as $\la$ tends to infinity. 
	We note that by Remark \ref{lastrem} of Appendix, $\|\tilde{v}_\lambda\|_{\infty}$ is uniformly bounded as $\lambda\rightarrow \infty.$ Thanks to Proposition 2.7 of \cite{giacomoni2021sobolev}, now we can apply Theorem \ref{thm:giacomoni_pq_regularity} and obtain that $\{\tilde{v}_{\la}\}$ is uniformly bounded in $C^{1,\alpha}(\Bar{\Om})$ for some $\alpha \in (0,1)$. Thus by Ascoli-Arzela theorem, upto a subsequence  \begin{equation} 
	\tilde{v}_\lambda \rightarrow v_0 \mbox{ in } C^1_0(\overline{\Omega}) \mbox{ as } \lambda\rightarrow \infty.
	\end{equation}
 Now, for any $\phi \in C_{c}^{\infty}({\Om})$,
	\begin{equation*}
		\mu \int_{\Om} |\nabla \tilde{v}_{\la}|^{p-2}\nabla \tilde{v}_{\la}\boldsymbol{\cdot}\nabla \phi  dx+ \int_{\Om} |\nabla \tilde{v}_{\la}|^{q-2}\nabla \tilde{v}_{\la}\boldsymbol{\cdot}\nabla \phi  dx = \int_{\Om}\phi g dx
	\end{equation*}
	Since $\nabla \tilde{v}_{\la} \rightarrow \nabla v_0$ uniformly in $\Omega,$ applying the limit as $\la \rightarrow \infty$ on either sides we get 
	\begin{equation*}
		\int_\Om |\nabla v_0|^{q-2}\nabla v_0 \boldsymbol{\cdot} \nabla \phi dx = \int_{\Om}\phi g dx
	\end{equation*}
	This implies that $v_0$ is the weak solution of 
	\begin{equation}
	\begin{aligned}
		-\De_q v_0=g(x) \ \text{in}\ \Om  \\
		v_0 =0 \ \text{on}\ \pa \Om
	\end{aligned}
	\end{equation}
	By the uniqueness of weak solution $v_0,$ the original sequence $\tilde{v}_\la$ itself converges to $v_0$ in $C^1_0(\overline{\Omega}).$ Let $\nu$ represents the unit outward normal on $\pa \Om$ and $m=\max_{x\in \pa \Om} \frac{\pa v_0}{\pa \nu}(x).$ We know that $m<0,$ thanks to Theorem 5 of Vasquez  \cite{vazquez1984strong}. By uniform convergence of $\frac{\pa \tilde{v}_\la }{\pa \nu} \rightarrow \frac{\pa v_0}{\pa \nu}$ we can find a $\la'>0$ for which  
	 $ \displaystyle \frac{\pa \tilde{v}_\la }{\pa \nu} (x) <\frac{m}{2} \;\; \forall \; x \in \pa\Omega \mbox{ and for all } \la\geq \la'.$
 Since $\Om$ is  assumed to be a smooth bounded domain in $\mathbb{R}^N$, there exists a $c>0$ independent of $\la \geq \la'$ such that 
	\begin{equation}\label{lower}
		\tilde{v}_\la (x)\geq c d(x) \mbox{ for } \la \geq \la' \mbox{ and } \forall x \in \Omega.
	\end{equation}
Once again using Theorem \ref{thm:giacomoni_pq_regularity} for some constant $C>0$ 
	\begin{equation}\label{upper}
		\tilde{v}_{\la}(x)\leq C \; d(x) \mbox{ for } \la \geq \la' \mbox{ and } \forall x \in \Omega.
		\end{equation}
Finally, using the monotonicity of $v_\la$ in any compact sub interval $[a,b]$ of $(0,\infty)$ there exists positive constants $m_1,m_2$ such that $m_1 d(x) \leq v_\la(x) \leq m_2 d(x)$ for all $\la\in [a,b].$ Combining this property of $v_\lambda$ along with the lower and upper estimates \eqref{lower}, \eqref{upper} for $\tilde{v}_\la$ we infer that for any given $\la_0>0$ there exists constants $c_1, c_2$ (which depends only on $\la_0$) such that 
$$ 	c_1\la^{\frac{1}{q-1}}d(x)\leq v_\la(x)\leq c_2 \la^{\frac{1}{q-1}}d(x).$$
Hence the proof. 
\hfill\qed\\

We use the above proposition to prove our main result of this paper. 
\begin{proof}[\textbf{Proof of Theorem \ref{thm:sing_esti}}]
	 We define $\tilde{u}_{\la}:=\la^{\frac{-1}{q-1+\beta}}u_\la$. Then, clearly $\tilde{u}_{\la}\in W_0^{1,q}(\Om)$ is the unique weak solution of the boundary value problem 
	\begin{equation}
	\begin{aligned}
		\label{eqn:sing_l infinity}
		-\gamma \De_p \tilde{u}_{\la}-\De_q \tilde{u}_{\la}=\frac{g(x)}{(\tilde{u}_{\la})^\beta} \ \text{in} \ \Om  \\
		\tilde{u}_{\la}=0\ \text{on} \ \pa \Om  
	\end{aligned}
	\end{equation}
	where $\gamma = \la^{\frac{p-q}{q-1+\beta}}$. As before, we note that $\gamma\rightarrow 0$ as $\lambda\rightarrow \infty$ and vice versa. Now, using the Moser iteration technique we can prove that $\|\tilde{u}_{\la}\|_{L^\infty}\leq M_0$ for some $M_0>0$ independent of $\gamma.$ The details of the uniform $L^\infty$ estimate is proved in the appendix (Theorem \ref{linf}). Therefore,
	we have $\frac{g(x)}{(\tilde{u}_{\la})^\beta}\geq \frac{g(x)}{M_0^\beta}$. 
	By the weak comparison principle, $\tilde{u}_{\la}\geq w_\gamma$ where $w_\gamma$ is the unique weak solution of 
	\begin{eqnarray}
		-\gamma \De_p w_\gamma -\De_q w_\gamma= \frac{g(x)}{M_0^\beta} \ \text{ in }  \Om \nonumber \\
		w_\gamma=0\ \text{on} \ \pa \Om \nonumber
	\end{eqnarray}
	Following the ideas in the proof of Proposition \ref{thm: constant_esti}, we find a $c_1>0$ such that $w_\gamma \geq c_1 d(x)$ for all $\gamma\in (0,\gamma_0).$  This implies, for large $\lambda$  \begin{equation}\label{lbb} 
	u_\la(x) \geq c_1 \la^{\frac{1}{q-1+\beta}}d(x)\mbox{ for all } \lambda \mbox{ large and for every }x \in \Om. \end{equation} Next, to obtain an upper bound for $u_\lambda$ we note that 
	\begin{equation*}
		-\De_p u_\la -\De_q u_\la =\la\frac{g(x)}{u_\la^\beta}\leq \la^{\frac{q-1}{q-1+\beta}}\frac{g(x)}{{c_1^\beta d(x)^\beta}}
	\end{equation*}
	From the assumption $(G_\beta)$, we have  ${g(x)}{d(x)^{-\beta}}\leq C d(x)^{-(\beta+\delta)}$ and $\beta+\delta <1.$ Now, let $z_\lambda$ denote the unique solution of
	\begin{eqnarray}
	 -\Delta_p z_\la-\De_q z_\la= \la^{\frac{q-1}{q-1+\beta}}{c_1^{-\beta} g(x) }{{ d(x)^{-\beta}}} \text{ in }  \Om \nonumber \\
		z_\lambda=0\ \text{ on }  \pa \Om \nonumber.
	\end{eqnarray}
	Again, from Proposition \ref{thm: constant_esti} there exists $c_2>0$ such that 
	$z_\la(x)\leq c_2 \la^{\frac{1}{q-1+\beta}} \,d(x)$ for large $\lambda.$ 
	Clearly, by weak comparison principle,  
	\begin{equation}\label{ubb}
	u_\lambda(x)\leq z_\la(x) \leq c_2 \la^{\frac{1}{q-1+\beta}} d(x) \mbox{ for large } \lambda.
	\end{equation}
	If $\la_1\leq \la_2,$ then taking $(u_{\la_1}-u_{\la_2})^+$ as the test function in the weak formulation of the \eqref{eqn:sing_esti} we can prove that $u_{\la_1}\leq u_{\la_2}$ a.e in $\Omega.$ Now using monotonicity of $u_\la$ along with the estimates (\ref{lbb}) and (\ref{ubb}) we have the required result. 
\end{proof}

\section{Uniqueness result}
In this section we consider the singular elliptic boundary value problem 
	\begin{equation}
	\begin{aligned}
		\label{eqn:uniqueness_pq1}
		-\De_p u-\De_q u=\la \frac{f(u)}{u^\beta} \ \text{in} \ \Om \\
		 u > 0 \text{ in } \Omega\\
		u=0\ \text{on}\ \pa \Om
	\end{aligned}
	\end{equation}
and	f satisfies the conditions (A1) and (A2). The existence of at least one positive solution of (\ref{eqn:uniqueness_pq1}) can be proved using the technique of monotone iterations (see Appendix for a proof). In this section we only prove the uniqueness of its weak solution when $\lambda$ is large,  i.e. Theorem \ref{thm: uniqueness_pq}.  The proof crucially depends on the estimates for (\ref{eqn:sing_esti}) when $\la >>1$. \\[3mm]
\textbf{Proof of Theorem 1.2 : }
	We prove the theorem in three steps. It may be noted that the steps 1 and 2 hold true for any $1< p< q<\infty .$ Restriction on $1<p<q<2$ is essentially due to the step 3.\\
	\textbf{Step 1}: \textit{There exists $k_1>0$ such that 
		\begin{equation}
			u(x) \geq k_1\la^{\frac{1}{q-1+\beta}}d(x) \mbox{ for large } \lambda.
		\end{equation}}
	Let $c= \inf_{[0,\infty)}f,$ then $c>0$ by $(A1)$. This implies, by weak comparison principle $u\geq v$ where $v \in W_0^{1,q}(\Om)$ is the unique weak solution of
	\begin{eqnarray}
		-\De_p v -\De_q v = \frac{\la c}{v^\beta}\ \text{in} \ \Om \nonumber \\
		v=0 \ \text{on} \ \pa \Om.\nonumber
	\end{eqnarray}
	By Theorem \ref{thm:sing_esti} above, we know that there exists $k_1$ such that
	\begin{equation*}
		\label{eqn:lowerbound_dist}
		v(x)\geq k_1\la^{\frac{1}{q-1+\beta}}d(x)	
	\end{equation*} 
	 for all $\la\geq\la_0$. Hence, $u \geq k_1\la^{\frac{1}{q-1+\beta}}d(x)$ for large $\la$.\\
	\vspace{0.2 cm}\\[3mm]
	\textbf{Step 2}: \textit{There exists $k_2>0$ such that $\|u\|_{C^1(\Bar{\Om})}\leq k_2 \la^{\frac{1}{q-1}}.$ In addition if we assume that $\sup_{[0,\infty)}f< \infty,$ then $\|u\|_{C^1(\Bar{\Om})}\leq k_2 \la^{\frac{1}{q-1+\beta }}.$}\\
	Let us define $w=\la^{-\frac{1}{(q-1)}}u$, then for $\delta=\la^{\frac{p-q}{q-1}}$
	\begin{equation}\label{eqnw}
		-\delta \Delta_{p}w - \Delta_{q} w = \frac{f(\la^{\frac{1}{q-1}}w)}{(\la^{\frac{1}{q-1}}w)^\beta}
	\end{equation}
	By (A1)-(A2) and Step 1 we can estimate the RHS of above equation as shown below. 
	\begin{equation*}
	\frac{f(\la^{\frac{1}{q-1}}w)}{(\la^{\frac{1}{q-1}}w)^\beta} \leq \; K(1+\frac{1}{(\la^{\frac{1}{q-1}}w)^\beta}) \leq \; \frac{K_1}{d^\beta}.
	\end{equation*}
	Now we can apply Theorem 2.1 to (\ref{eqnw}) and obtain a constant $k_2$ independent of $\delta$ such that  $\|w\|_{C^1(\Bar{\Om})}\leq k_2.$ That is, 
	$\|u\|_{C^1(\Bar{\Om})}\leq k_2 \la^{\frac{1}{q-1}}$.
	
	In addition if we suppose that $\sup_{[0,\infty)}f<\infty$, then we obtain a sharper estimate for the $C^1$ norm of $u.$ Define $\tilde{w}:=\la^{\frac{-1}{q-1+\beta}}u$. Then for $\gamma=\la^{\frac{p-q}{q-1+\beta}},$ 
	\begin{equation*}
		-\gamma \Delta_{p}\tilde{w} - \Delta_{q} \tilde{w} = \frac{f(\la^{\frac{1}{q-1+\beta}}\tilde{w})}{\tilde{w}^\beta}\leq \frac{\sup_{[0,\infty)} f}{\tilde{w}^\beta}
	\end{equation*}
	Using the estimate proven in Step 1 above, one can verify that $\tilde{w}$ satisfies the hypothesis of Theorem \ref{thm:giacomoni_pq_regularity} and hence $\tilde{w}$ is uniformly bounded in $C^{1,\alpha}(\Bar{\Om})$ independent of $\gamma$. Hence, there exists $k_2>0$ such that $\|\tilde{w}\|_{C^1(\Bar{\Om})}\leq k_2$. In other words, $\|u\|_{C^1(\Bar{\Om})}\leq k_2\la^{\frac{1}{q-1+\beta}}$.\\[3mm]
	\textbf{Step 3}: \textit{Uniqueness of solution for \eqref{eqn:uniqueness_pq1} when $\lambda$ is large is proved using Step 1 and Step 2}. \\
	On the contrary, suppose that the solution of (\ref{eqn:uniqueness_pq1}) is not unique. Let $u, v \in C^{1, \alpha}(\Bar{\Om})$ be weak solutions of $(\ref{eqn:uniqueness_pq1}).$ Now,
	\begin{equation*}
		-\Delta_p u-\Delta_q u-(-\Delta_p v-\Delta_q v)=\la(h(u)-h(v)).
	\end{equation*}
	where $h(z)=\frac{f(z)}{z^\beta}$. Multiplying both sides by $u-v$ and integrating, 
	\begin{eqnarray}
	\int_{\Omega}((|\nabla u|^{p-2}\nabla u-|\nabla v|^{p-2}\nabla v) + (|\nabla u|^{q-2}\nabla u-|\nabla v|^{q-2}\nabla v))\boldsymbol{\cdot}(\nabla u-\nabla v)dx \nonumber \\
		\;\;\;\;\;\;= \;\;\;\;\;\; \la \int_{\Omega}(h(u)-h(v))(u-v)dx \nonumber 
	\end{eqnarray}
	By the inequality
	\begin{equation*}
		(|a|^{r-2}a -|b|^{r-2}b)\boldsymbol{\cdot}(a-b)\geq (r-1)\frac{|a-b|^2}{(|a|+|b|)^{2-r}} \mbox{ for } 1<r\leq 2
	\end{equation*}
where $a,b\in \mathbb{R}^n$ and using the fact that $|\nabla u(x)|+|\nabla v(x)| \leq \|u\|_{C^1}+ \|v\|_{C^1}$ we obtain, 
	\begin{equation*}
		(p-1)\int_{\Omega}\frac{|\nabla u-\nabla v|^2}{(\|u\|_{C^1}+\|v\|_{C^1})^{2-p}} dx +(q-1)\int_{\Omega}\frac{|\nabla u-\nabla v|^2}{(\|u\|_{C^1}+\|v\|_{C^1})^{2-q}} dx \leq \la \int_{\Omega}h'(\xi)(u-v)^2dx  .\\[3mm]
	\end{equation*}
	We have evaluated the RHS using mean value theorem on $h$,  for some function $\xi$ lying in between $u$ and $v$.
	By the estimate in Step 2, we have $\|u\|_{C^1}+\|v\|_{C^1}\leq 2k_2\la^{\frac{1}{q-1}}$. Also, as $p<q$ and $\la$ is large,
	\begin{equation*}
		(2k_2\la^{\frac{1}{q-1}})^{2-p}\geq (2k_2\la^{\frac{1}{q-1}})^{2-q}.  
	\end{equation*} 
	Hence, 
	\begin{equation}\nonumber 
		(p-1)\int_{\Omega}|\nabla (u-v)|^2 dx + (q-1)\int_{\Omega}|\nabla (u-v)|^2 dx \leq k\la^{1+\frac{2-p}{q-1}}\int_{\Omega}h'(\xi)(u-v)^2dx 
	\end{equation} 
	for $k>0$. That is, 
	\begin{equation}
		(p+q-2)\int_{\Omega}|\nabla (u-v)|^2 dx + \leq k\la^{1+\frac{2-p}{q-1}}\int_{\Omega}h'(\xi)(u-v)^2dx 
	\end{equation} 
	
	Now, we closely follow the proof of Theorem 1.1 of \cite{cong2022uniqueness} and using the estimates in Step 1 and Step 2, for any $f$ satisfying $(A1)$ and $(A2)$ we obtain that 
	\begin{itemize}
		\item [(i)] \begin{equation}\nonumber
			(p+q-2)\int_{\Omega}|\nabla (u-v)|^2 dx  \leq m \la^{1+\frac{2-p}{q-1}-\frac{2}{q-1+\beta}} \int_\Omega |\nabla(u-v)|^2 dx.
		\end{equation}
		In addition, if we assume $\sup_{[0,\infty)}f<\infty,$
		\item[(ii)]\begin{equation}\nonumber
			(p+q-2)\int_{\Omega}|\nabla (u-v)|^2 dx \leq m \la^{1-\frac{p}{q-1+\beta}}\int_\Omega |\nabla(u-v)|^2 dx.
		\end{equation}

	\end{itemize}
	It is clear that $\la$ in the RHS has a negative exponent when $\beta<\beta'=\frac{(q-1)(1+p-q)}{1+q-p}$ in (i) and when $\beta<\beta''=1-q+p$ in (ii). Clearly, $\beta', \beta'' \in (0,1)$.
\par Thus for $\lambda$ large, the inequalities $(i)$ and $(ii)$ holds only when the integrand $\nabla(u-v)$ is identically zero. Since $u=v=0$ on $\pa\Omega,$ this implies that $u\equiv v$ in $\Om$ for large $\lambda.$ Hence uniqueness results holds for $\la$ large. \hfill\qed.

\section{Appendix}
In this section we shall prove the existence result for (\ref{eqn:uniqueness_pq}) and an $L^\infty$ regularity result for (\ref{eqn:sing_l infinity}). 
\subsection{Existence of weak solution for (\ref{eqn:uniqueness_pq})}
\begin{proposition}
	Suppose all the conditions of Theorem \ref{thm: uniqueness_pq} are satisfied. Then, there exists $u_\la \in W_0^{1,p}(\Om)$ such that $u_\la$ is a weak solution of (\ref{eqn:uniqueness_pq}).
\end{proposition}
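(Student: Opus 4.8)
The plan is to trap a solution of (\ref{eqn:uniqueness_pq}) between an ordered pair of sub- and super-solutions extracted from (A1)--(A2), to realize it as the limit of solutions of non-singular \emph{regularized} problems, and then to remove the regularization by means of a uniform $C^{1,\alpha}$ bound supplied by Theorem \ref{thm:giacomoni_pq_regularity}. First I would record the two one-sided bounds on the reaction $h(s):=f(s)/s^{\beta}$ that the hypotheses provide. Setting $c=\inf_{[0,\infty)}f>0$, condition (A1) gives $h(s)\ge c\,s^{-\beta}$. For an upper bound, (A2) makes $h$ decreasing on $[a,\infty)$, so $h(s)\le f(a)a^{-\beta}$ there, while continuity of $f$ gives $f\le F:=\max_{[0,a]}f$ on $[0,a]$; hence $f(s)\le C(1+s^{\beta})$ and $h(s)\le C\,s^{-\beta}+C$ on all of $(0,\infty)$. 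In particular the reaction is only mildly singular ($\beta<1$) and is, up to a bounded perturbation, of the same type as $\la c\,u^{-\beta}$.

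Next I would build the barriers. Let $\underline u$ solve $-\De_p\underline u-\De_q\underline u=\la c\,\underline u^{-\beta}$ and let $\overline u$ solve $-\De_p\overline u-\De_q\overline u=\la\bigl(C\,\overline u^{-\beta}+C\bigr)$, both with zero boundary data. Existence, uniqueness and $C^{1,\alpha}$-regularity of these (purely) singular problems, together with two-sided comparability to the distance function, follow by the analysis of \cite{giacomoni2021sobolev} and the method of Theorem \ref{thm:sing_esti} and Proposition \ref{thm: constant_esti}; thus $k_1d(x)\le\underline u\le\overline u\le k_2 d(x)$. Since the datum defining $\overline u$ dominates that defining $\underline u$, the weak comparison principle gives $\underline u\le\overline u$, and by the one-sided bounds above $\underline u$ is a sub- and $\overline u$ a super-solution of (\ref{eqn:uniqueness_pq}).

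The crux is to pass from barriers to a genuine solution in the presence of the boundary singularity. A monotone iteration cannot be run directly on (\ref{eqn:uniqueness_pq}), because $h=f/s^{\beta}$ is (eventually) \emph{decreasing} rather than increasing and blows up as $s\to0^{+}$. I would therefore regularize: for $n\in\mathbb{N}$ solve the non-singular problem
\[
-\De_p u_n-\De_q u_n=\la\,\frac{f(u_n)}{(u_n+\tfrac1n)^{\beta}}\quad\text{in }\Om,\qquad u_n=0\ \text{on }\pa\Om .
\]
Its reaction is bounded and continuous, so each such problem is solvable (by minimizing the coercive, weakly lower semicontinuous associated energy, or by a monotone iteration after adding a term $Ks^{q-1}$ that renders the truncated reaction increasing on the relevant range), and the comparison principle confines the iterates to the order interval, $\underline u\le u_n\le\overline u$, uniformly in $n$. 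The decisive consequence is the $n$-uniform lower barrier $u_n\ge k_1 d(x)$: it forces $f(u_n)(u_n+\tfrac1n)^{-\beta}\le C\,d(x)^{-\beta}$, a bound lying in the class $(G_0)$ independently of $n$, while $u_n\le\overline u\le M$ and $u_n\le k_2 d(x)$. Hence Theorem \ref{thm:giacomoni_pq_regularity} yields a uniform $C^{1,\alpha}(\overline\Om)$ estimate for $\{u_n\}$.

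Finally I would let $n\to\infty$. Arzel\`a--Ascoli gives $u_n\to u$ in $C^{1}(\overline\Om)$ along a subsequence, with $k_1 d(x)\le u\le k_2 d(x)$, so $u>0$ in $\Om$ and $u\in C^{1,\alpha}(\overline\Om)\subset W_0^{1,p}(\Om)$. The $C^{1}$-convergence of gradients lets me pass to the limit in the two quasilinear terms of the weak formulation, while $f(u_n)(u_n+\tfrac1n)^{-\beta}\to f(u)u^{-\beta}$ pointwise and is dominated by $\la C\,d(x)^{-\beta}\in L^{1}(\Om)$ (as $\beta<1$), so dominated convergence identifies the limit of the right-hand side. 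Thus $u$ is a positive weak solution of (\ref{eqn:uniqueness_pq}). The main obstacle throughout is exactly the boundary singularity: everything rests on the uniform lower barrier $u_n\ge k_1 d(x)$, which simultaneously keeps the regularized reaction in the class $(G_0)$ so that the regularity theorem applies uniformly, and makes the singular term $L^{1}$-dominated so that the limit passes through it.
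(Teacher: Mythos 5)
Your route differs from the paper's: the paper keeps the singular term, splitting the reaction as $\la f(0)u^{-\beta}+\la h(u)$ with $h(u)=(f(u)-f(0))u^{-\beta}$ made monotone increasing (after adding $Ku$ if needed), and runs a monotone iteration between the same two barriers $\underline u$ and $\overline u$ that you construct; you instead regularize the singularity by $(u+\tfrac1n)^{-\beta}$ and pass to the limit by uniform $C^{1,\alpha}$ compactness. The regularization-plus-compactness strategy is legitimate in principle, but as written it has a genuine gap at exactly the point you yourself identify as decisive.

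The gap is the claim that the comparison principle gives $\underline u\le u_n$. Your $\underline u$ solves $-\De_p\underline u-\De_q\underline u=\la c\,\underline u^{-\beta}$, so for it to be a subsolution of the regularized problem you would need $\la c\,\underline u^{-\beta}\le \la f(\underline u)(\underline u+\tfrac1n)^{-\beta}$. The regularization makes the reaction \emph{smaller}: near $\pa\Om$ the left side blows up like $d(x)^{-\beta}$ while the right side is bounded by $\la f(\underline u)\,n^{\beta}$, so the inequality fails and $\underline u$ is not a subsolution of the approximate problems. Since the $n$-uniform lower barrier $u_n\ge k_1 d(x)$ is what keeps the regularized reaction in the class $(G_0)$, feeds Theorem \ref{thm:giacomoni_pq_regularity}, and dominates the singular term in the limit, the argument does not close as stated. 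The repair is not difficult: once you have the $n$-independent upper bound $u_n\le\overline u\le M$ (which does hold, since $\overline u$ \emph{is} a supersolution of every regularized problem), observe that $f(u_n)(u_n+\tfrac1n)^{-\beta}\ge c\,(M+1)^{-\beta}$, a positive constant independent of $n$, and compare $u_n$ with the solution of the constant-right-hand-side problem, which is bounded below by $k_1 d(x)$ by Proposition \ref{thm: constant_esti} and the Hopf-type argument; alternatively replace $\underline u$ by the $n$-dependent subsolution solving $-\De_p w-\De_q w=\la c\,(w+\tfrac1n)^{-\beta}$ and show its lower barrier is uniform in $n$. A secondary point you only gesture at: for a non-monotone bounded reaction, a minimizer of the regularized energy is not automatically confined to the order interval $[\underline u,\overline u]$; one must truncate the nonlinearity to that interval (or add a term $Ks$ restoring monotonicity, as the paper does) before invoking comparison.
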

\textbf{Proof: } 
	The differential equation
	\begin{equation*}
		-\De_p u -\De_q u = \la \frac{f(u)}{u^\beta}
	\end{equation*}
	can be written as 
	\begin{equation}
		\label{eqn:transform_existence}
		-\De_p u -\De_q u-\la \frac{f(0)}{u^\beta}=\la h(u)
	\end{equation}
	where $h(u)=\frac{f(u)-f(0)}{u^\beta}$. We assume that the function $h(u)$ is monotonically increasing. If not, we can choose an appropriate $K>0$ such that  $h(u)+Ku$ is monotone increasing in $[0,\infty).$ And we will consider the PDE  $	-\De_p u -\De_q u-\la \frac{f(0)}{u^\beta}+K u=\la (h(u)+K u)$ instead of (\ref{eqn:transform_existence}). The proof does not vary much among both cases. So, without loss of generality, the monotonicity of $h$ can be assumed.
\par Let  $c=\inf_{[0,\infty)} f(t)$, then $c>0$ by the condition $(A1)$ and  	\begin{equation*}
		h(t)\geq \; (\frac{c-f(0)}{t^\beta})
	\end{equation*}
	for all $t>0$. Assume that $\underline{u} \in W_0^{1,q}(\Om)$ is the unique weak solution of 
	\begin{equation}
	\begin{aligned}
		\label{eqn:subsolution}
		-\De_p u -\De_q u=\la \frac{c}{u^\beta} \ \text{in}\ \Om  \\
		u=0\ \text{on}\ \pa \Om.
	\end{aligned}
	\end{equation}
	The existence and uniqueness of $\underline{u}$ is known from \cite{giacomoni2021sobolev}. Clearly, $\underline{u}$ is a subsolution of (\ref{eqn:transform_existence}). By (A2), there exists $C>0$ such that
\begin{equation*}
	\frac{f(t)}{t^\beta} \leq C(1+\frac{1}{t^\beta})
\end{equation*}
for all $t>0$. Let $\Bar{u} \in  W_0^{1,q}(\Om)$ be the unique weak solution of 
\begin{equation}
\begin{aligned}
	-\De_p u -\De_q u=\la C(1+\frac{1}{u^\beta}) \ \text{in}\ \Om  \\
	u=0\ \text{on}\ \pa \Om.
\end{aligned}
\end{equation}
$\Bar{u}$ is a super-solution of (\ref{eqn:transform_existence}). The existence, uniqueness and $L^\infty$ regularity of $\Bar{u}$ can be derived from \cite{giacomoni2021sobolev}. Choosing $C$ large enough if required we can show that $\underline{u} \leq \overline{u}.$ Let $v_0=\underline{u}$. Define the sequence $\{v_n\}_{n \in \mathbb{N}}\subset  W_0^{1,q}(\Om)$ iteratively as follows:
Let $v_{n+1}$ be the unique weak solution of the BVP
\begin{equation}
 \begin{aligned}   
\label{eqn:iteration_existence_pq}
	-\De_p v_{n+1} -\De_q v_{n+1}-\la \frac{f(0)}{v_{n+1}^\beta}&=&\la h(v_n)  \ \text{in}\ \Om  \\
	v_{n+1}&=&0\ \text{on}\ \pa \Om.
\end{aligned}
\end{equation}
for every $n \in \mathbb{N}.$ 
Using the monotonicity of $h(t)$ we can show that 
$$c d(x) \leq \underline{u}\leq \cdots v_n \leq v_{n+1}\leq \cdots \Bar{u}\leq M .$$
Now using the standard ideas we can pass through the limit in \eqref{eqn:iteration_existence_pq} and prove the existence of a weak solution of \eqref{eqn:uniqueness_pq}.
\hfill\qed.

\subsection{Uniform $L^\infty$ regularity} 
 Now, we prove that $\{\tilde{u}_\la\}$ is uniformly bounded in $L^\infty(\Om)$, where $\tilde{u}_\la$ is the unique weak solution of 
\begin{equation}
\begin{aligned}
	\label{eqn:uniform_l_infinity}
	-\gamma \De_p \tilde{u}_{\la}-\De_q \tilde{u}_{\la}=\frac{g(x)}{(\tilde{u}_{\la})^\beta} \ \text{in} \ \Om  \\
	\tilde{u}_{\la}=0\ \text{on} \ \pa \Om  
\end{aligned}
\end{equation}
where $1<p<q<\infty$, $0<\beta<1,$ and $0<g(x)\leq \frac{C}{d(x)^\de}$ for some $C>0$, $0<\beta+\de<1$ and $\gamma = \la^{\frac{p-q}{q-1+\beta}}$. 
We now state the uniform $L^\infty$ regularity theorem:
\begin{theorem}\label{linf}
	Let $\tilde{u}_{\la}$ be the unique weak solution of (\ref{eqn:uniform_l_infinity}). Then there exists $M_0>0$ independent of $\lambda$ such that $\|\tilde{u}_{\la}\|_{L^\infty(\Om)}\leq M_0$ for every $\la>0$. 
\end{theorem}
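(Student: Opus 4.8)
\noindent\emph{Proof proposal.} The plan is to run a Moser-type iteration directly on the rescaled equation (\ref{eqn:uniform_l_infinity}) and to extract a bound on $\|\tilde u_\lambda\|_{L^\infty(\Omega)}$ that depends only on $N$, $q$, $\beta$, $\delta$, $C$ and $|\Omega|$, so that it is automatically independent of the coefficient $\gamma=\lambda^{(p-q)/(q-1+\beta)}$, and hence of $\lambda$. The one structural observation that makes the estimate uniform is that every admissible test function used in the iteration is a nondecreasing function of $\tilde u_\lambda$ vanishing on $\partial\Omega$: for $\phi=\Phi(\tilde u_\lambda)$ with $\Phi'\ge 0$ one has $\int_\Omega|\nabla\tilde u_\lambda|^{p-2}\nabla\tilde u_\lambda\cdot\nabla\phi=\int_\Omega|\nabla\tilde u_\lambda|^{p}\,\Phi'(\tilde u_\lambda)\,dx\ge 0$, so the $p$-Laplacian contribution -- the sole carrier of the factor $\gamma$ -- has a favourable sign and can simply be discarded. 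This reduces (\ref{eqn:uniform_l_infinity}) to the one-sided, $\gamma$-free energy inequality
\[
\int_\Omega|\nabla\tilde u_\lambda|^{q-2}\nabla\tilde u_\lambda\cdot\nabla\phi\,dx\le\int_\Omega\frac{g(x)}{\tilde u_\lambda^{\beta}}\,\phi\,dx ,
\]
which is the object I would iterate.

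\smallskip
\noindent Starting from this inequality I would test with the truncated powers $\phi=\tilde u_\lambda\min(\tilde u_\lambda,L)^{q(\kappa-1)}$, let $L\to\infty$, and apply the Sobolev inequality (any subcritical embedding suffices when $q\ge N$) to pass from $\|\tilde u_\lambda\|_{L^{\kappa q}}$-type quantities to $\|\tilde u_\lambda\|_{L^{\kappa q^{*}}}$, thereby raising the integrability exponent by a fixed factor at each stage. The singular factor is in fact favourable here: after the substitution the reaction integral becomes $\int_\Omega g\,\tilde u_\lambda^{\,q\kappa-q+1-\beta}\,dx$ with exponent $q\kappa-q+1-\beta\ge 1-\beta>0$ for every $\kappa\ge 1$, so no singularity survives where $\tilde u_\lambda$ is small and the term is genuinely finite.

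\smallskip
\noindent The main obstacle is to control these weighted reaction integrals $\int_\Omega d(x)^{-\delta}\tilde u_\lambda^{\,m}\,dx$ (with $m\ge 1-\beta$) by a small multiple of the left-hand Dirichlet energy, with constants free of $\gamma$, and it is precisely here that the full strength of $(G_\beta)$, namely $\delta+\beta<1$, must be used rather than merely the weaker $\delta<1$. The device I would employ is the factorisation $d^{-\delta}\tilde u_\lambda^{\,m}=(\tilde u_\lambda/d)^{\delta}\,\tilde u_\lambda^{\,m-\delta}$, which is legitimate because $m-\delta\ge 1-\beta-\delta>0$, followed by Hardy's inequality $\|\tilde u_\lambda/d\|_{L^{q}(\Omega)}\le C(\Omega,q)\|\nabla\tilde u_\lambda\|_{L^{q}(\Omega)}$ and a H\"older/Young splitting to trade the singular factor $(\tilde u_\lambda/d)^{\delta}$ for a controlled fraction of $\int_\Omega|\nabla\tilde u_\lambda|^{q}$. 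The delicate point, demanding the most care, is to balance the H\"older exponents so that the energy generated on the right can be absorbed into the left-hand $q$-term at every stage and so that the accumulated multiplicative constants remain bounded as $\kappa\to\infty$; because the only $\gamma$-dependent term was discarded at the outset, these constants are independent of $\gamma$. Passing to the limit $\kappa\to\infty$ (or, equivalently, running the Stampacchia level-set scheme on $(\tilde u_\lambda-k)^{+}$) then yields $\|\tilde u_\lambda\|_{L^\infty(\Omega)}\le M_0$ with $M_0=M_0(N,q,\beta,\delta,C,|\Omega|)$, the asserted uniform bound.
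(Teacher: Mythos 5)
Your proposal is sound in its essential point and would work, but it follows a genuinely different technical route from the paper. The one observation that both arguments live on is identical: every test function used is a nondecreasing function of $\tilde u_\la$ vanishing on $\pa\Om$, so the $p$-Laplacian term --- the sole carrier of $\gamma$ --- is nonnegative and can be discarded, leaving a $\gamma$-free inequality; the paper does exactly this when it tests with $T_k(\bar u)$ and ``uses the fact that $\gamma>0$.'' Where you diverge is in everything after that. First, for the singularity $u^{-\beta}$: the paper localizes to the set $\{u\ge 1\}$ by means of the cut-offs $\phi_\epsilon(u)w$, where $u^{-\beta}\le 1$, so the singular factor disappears entirely and only $\delta<1$ is needed; you instead absorb $u^{-\beta}$ into the positive power of $u$ carried by the test function and then pay for the boundary weight with the factorisation $d^{-\delta}u^{m}=(u/d)^{\delta}u^{m-\delta}$ plus Hardy, which genuinely requires $\beta+\delta<1$. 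Second, for the iteration itself: the paper runs the Stampacchia level-set scheme on $\bar u=(u-1)_+$, writing $Cd^{-\delta}=\mathrm{div}(\mathbf F)$ with $\mathbf F\in L^{r}(\Om)$ and invoking the standard decay lemma for $|U_k|$ from Peral's notes, whereas you propose a power-type Moser iteration. Your route is more self-contained (no appeal to the representation $d^{-\delta}\in W^{-1,r}$ for all $r$, which is true but not entirely trivial), but it leaves as ``the delicate point'' precisely the exponent bookkeeping that the Stampacchia lemma packages away; note that after the first step ($\kappa=1$) yields a uniform bound on $\|\nabla\tilde u_\la\|_{L^q}$, the Hardy factor $(\tilde u_\la/d)^{\delta}$ contributes only a fixed constant at each later stage, so no repeated absorption into the left-hand energy is actually needed and the accumulated constants behave as in the unweighted case. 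Neither gap is fatal, but if you want the shorter path, the paper's combination of the $\{u\ge1\}$ localization with the divergence representation of $d^{-\delta}$ avoids both the $\beta+\delta<1$ bookkeeping and the infinite product of constants.
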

\begin{proof}
	The proof follows the ideas of Lemma 3.2 from \cite{kumar2020singular} and Theorem E.0.19 of \cite{peral1997multiplicity}. Let $u \in W_0^{1,q}(\Om)$ be the weak solution of (\ref{eqn:uniform_l_infinity}) and $\phi$ be a $C^1$ cut-off function such that $\phi(t)=0$ for $t\leq 0$, $\phi'(t)\geq 0$ for $0\leq t\leq 1$ and $\phi(t)=1$ for $t\geq 1$. By the weak comparison principle we know that $u\geq 0$ in $\Om.$ Let us  define $\phi_\epsilon(t):=\phi(\frac{t-1}{\epsilon})$ for $t\in \mathbb{R}$. Then, $\phi_\epsilon(u)\in W^{1,q}_0(\Omega)$ and  $\nabla \phi_\epsilon(u)=\phi_\epsilon'(u)\nabla u.$ 
	For a non-negative function $w \in C_c^{\infty}(\Om),$ using $\phi_\epsilon(u)w$ as a test function in the weak formulation of (\ref{eqn:uniform_l_infinity}) we obtain
	\begin{eqnarray}
		\int_\Om (\gamma |\nabla u|^{p} + |\nabla u|^{q}) \phi_\epsilon'(u)w dx + \int_\Om (\gamma |\nabla u|^{p-2}+ |\nabla u|^{q-2} ) \phi_\epsilon(u)\nabla u \boldsymbol{\cdot} \nabla w dx \nonumber \\
		\ \ \ = \int_\Om \frac{g(x)}{u^\beta} \phi_\epsilon(u)w dx \nonumber 
	\end{eqnarray}	
	Taking $\epsilon \rightarrow 0$, we have 
	\begin{equation}\nonumber
		\int_{\Om \cap \{u\geq 1\}} (\gamma |\nabla u|^{p-2}+ |\nabla u|^{q-2} ) \nabla u \boldsymbol{\cdot} \nabla w dx \leq \int_{\Om\cap \{u\geq 1\}} \frac{g(x)}{u^\beta} wdx \leq \int_\Om g(x) wdx 
	\end{equation}	
	as $\phi'(t)\geq 0$ for all $t$ and $\frac{1}{u^\beta}\leq 1$ when $u \geq 1$. That is,
	\begin{equation}\nonumber
	\gamma \int_\Om |\nabla \Bar{u}|^{p-2} \nabla \Bar{u}\boldsymbol{\cdot} \nabla w dx	+\int_\Om |\nabla \Bar{u}|^{q-2} \nabla \Bar{u}\boldsymbol{\cdot} \nabla w dx \leq C\int_\Om \frac{1}{d(x)^\delta} w dx
	\end{equation}
	as $\gamma >0$, where $\Bar{u}:=(u-1)_+$ is the positive part of the function $(u-1).$
 It is clear that $\frac{C}{d(x)^\delta} \in W^{-1,r}(\Om)$ for $1<r<\infty $. So, there exists a function $\textbf{F}=(F_1,F_2,..,F_N)$ such that $\frac{C}{d(x)^\delta}=\text{div}(F)$. Here, $F_i \in L^r(\Om)$ for $1\leq i\leq N$.
	\begin{equation}
		\label{eqn:l infinity_pq_for test function}
		\gamma \int_\Om |\nabla \Bar{u}|^{p-2} \nabla \Bar{u}\boldsymbol{\cdot} \nabla w dx	+ \int_\Om |\nabla \Bar{u}|^{q-2} \nabla \Bar{u}\boldsymbol{\cdot} \nabla w dx \leq \int_\Om \textbf{F}\boldsymbol{\cdot} \nabla w dx 
	\end{equation} 
\par Define the truncation function $T_k(s):=(s-k)\chi_{[k,\infty)}$ which was introduced in \cite{stampacchia1966equations}, for $k>0$. Let $U_k:=\{x \in \Om : \Bar{u}(x)\geq k\}$ for $k>0$.  Choosing $T_k(\Bar{u})$ for a $k>0$ as the test function in (\ref{eqn:l infinity_pq_for test function}) and using the fact that $\gamma>0,$ we have 
	\begin{equation*}
		\int_{U_k} |\nabla \Bar{u}|^{q} dx \leq \int_{U_k}\textbf{F}\boldsymbol{\cdot} \nabla \Bar{u} dx. 
	\end{equation*} 
	So,
	\begin{equation}
		\Big(\int_{U_k}|\nabla  \Bar{u}|^q dx\Big)^{1-\frac{1}{q}}\leq \Big(\int_{U_k}|\textbf{F}|^r dx\Big)^\frac{1}{r}|U_k|^{1-(\frac{1}{r}+\frac{1}{q})}
	\end{equation}
	by H{\"o}lder's inequality, where $|U_k|$ is the Lebesgue measure of the set $U_k$. We can now proceed exactly as in \cite{peral1997multiplicity} and obtain the required result.

\end{proof}	
\begin{remark}\label{lastrem}
The above theorem is true even when $\beta=0.$
\end{remark}	
\bibliographystyle{plain}
\bibliography{References}	

\end{document}